\def\n{\mathbb{N}}
\def\f{\mathcal{F}}
\def\fo{\mathcal{F}}
\def\g{\mathcal{G}}
\def\fin{\mathcal{FIN}}
\def\vc{\mathcal{VC}}
\def\vci{\mathcal{VC}^\infty}
\def\evc{\underline{\underline{E}}G}
\def\efin{\underline{E}G}
\def\efine{\underline{E}}
\def\z{\mathbb{Z}}
\def\s{\mathbb{S}}
\DeclareMathOperator{\comm}{Comm}
\DeclareMathOperator{\vcd}{vcd}
\DeclareMathOperator{\ma}{Max}
\DeclareMathOperator{\Mod}{Mod}
\def\gd{\mathrm{gd}}
\def\cd{\mathrm{cd}}
\def\gdf{\underline{\mathrm{gd}}}
\def\gdvc{\underline{\underline{\mathrm{gd}}}}
\newtheorem{thm}{Theorem}[section]
\newtheorem{prop}[thm]{Proposition}
\newtheorem{theorem}{Theorem}
\newtheorem{proposition}[theorem]{Proposition}
\theoremstyle{definition}
\newtheorem{defi}[thm]{Definition}
\newtheorem{rem}[thm]{Remark}
\newtheorem{lem}[thm]{Lemma}
\newtheorem{cor}[thm]{Corollary}
\title[Classifying Spaces]{on the virtually-cyclic dimension  of surface braid groups and right-angled Artin groups}
\author{Alejandra Trujillo-Negrete}
\address{Centro de Investigaci\'on en Matem\'aticas, A. C.  Jalisco S/N, Col. Valenciana CP: 36023 Guanajuato, Gto, México}
\email{alejandra.trujillo@cimat.mx}
\begin{document}

\begin{abstract}

We give a bound for the virtually cyclic dimension of groups with a normal subgroup of finite index which satisfies that every infinite virtually-cyclic subgroup is contained in a unique maximal such subgroup. As an application we provide a bound for 
 the virtually-cyclic dimension for the braid group of a closed surface with genus greater than 2 and for right-angled Artin groups.  
\end{abstract}
\thanks{The author was supported by research grants from CONACyT-Mexico}

\maketitle
\section{Introduction} 

Let $G$ be a group. A \emph{family} $\fo$ of subgroups of $G$ is a set of subgroups of $G$ which is closed under conjugation and taking subgroups.  
 A model for the \textit{classifying space $E_{\fo}G$ for the family $\fo$} is a $G$-CW-complex $X$ such that the fixed point set $X^H$ is contractible for $H \in \fo$ and is empty if $H \not\in \fo$. 

Let $\fin_G$ and $\vc_G $ be the families of finite and virtually cyclic subgroups of $G$, respectively.  We abbreviate $\efin:=E_{\fin_G}G$ and $\evc:=E_{\vc_G}G$. The study of models for $\efin $ and $\evc$ has been motivated by the Baum-Connes and Farrel-Jones Conjectures. 

A model for $E_{\fo}G$ always exists and two models for $E_{\fo}G$ are $G$-homotopy equivalent (see \cite{luck}), however the  model may not have  finite dimension. The smallest possible dimension of a model for $E_{\f}G$ is called the \textit{geometric dimension of $G$
 for the family $\f$ } and is usually denoted as $\gd_{\f}G$.   
We abbreviate    $\gdf G:=\gd_{\fin_G}G$ and $\gdvc G: =\gd_{\vc_G}G$, they are called proper and virtually-cyclic dimension respectively, and for the trivial family $\{1\}$,  denote  $\gd G:=\gd_{\{1\}}G$.     

We remark that in general  $\gdf G \leq \gdvc G+1$   (see \cite{luck-weiermann}),   and for many families of subgroups the following inequality hold: 
\begin{equation}\label{eq-1}
\gdvc G  \leq \gdf G+1.
\end{equation}
In particular, L\"uck and Weiermann  showed in \cite{luck-weiermann} that if a group $G$ has pro-perty $\ma_{\vci_G}$  (which states that  every infinite virtually-cyclic subgroup is contained in a unique maximal such subgroup), then $G$ satisfies inequality (\refeq{eq-1}). In \cite{D-A} it was showed that the mapping class group of an orientable compact  surface has normal subgroups of finite index   $\Gamma$ with the property $\ma_{\vci_{\Gamma}}$.     \\

The principal result of this paper is the following: 
\begin{theorem}\label{main}
Let $1 \to H\to G \xrightarrow{\psi} F\to 1$ be a short exact sequence of groups where $F$ is a non-trivial finite group.  If $G$ is torsion free, $H$ has property $\ma_{\vci_H}$ and $\gdvc H<\infty$, then  $\gdvc G\leq \max\{3,\gdf H +1\}$.  
\end{theorem}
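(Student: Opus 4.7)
The plan is to apply the L\"uck--Weiermann pushout machinery directly to $G$, after first promoting property $\ma_{\vci_H}$ from $H$ to the analogous property $\ma_{\vci_G}$ on $G$ using torsion-freeness. Since $G$ (and hence $H$) is torsion free, every infinite virtually cyclic subgroup of either group is isomorphic to $\mathbb{Z}$. To establish $\ma_{\vci_G}$, I would fix an infinite cyclic $V=\langle v\rangle\le G$ and let $M_H$ be the unique maximal cyclic of $H$ containing the finite-index subgroup $V\cap H$. The key observation is that $V$ centralizes $M_H$: if $v$ acted on $M_H$ by inversion then with $n=[G:H]$ one would obtain $v^n=(v^n)^{-1}$, forcing $v^{2n}=1$ and contradicting torsion-freeness. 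Hence $VM_H$ is abelian, torsion free and of rank one, so cyclic. A Zorn/ascending-chain argument then produces a maximal cyclic $M_G\ge V$ in $G$, and uniqueness follows by applying the same compositum argument to any two candidates. The step requiring most care is ruling out locally-cyclic non-cyclic towers inside $G$, which I would handle by intersecting with $H$ and invoking $\ma_{\vci_H}$.

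Once $G$ satisfies $\ma_{\vci_G}$, the L\"uck--Weiermann $G$-pushout exhibits $\evce G$ as built from $\efine G$ and a model $E_{\vc[M_G]}N_G(M_G)$ glued along $\efine N_G(M_G)$, yielding
$$\gdvc G\le\max\bigl\{\gdf G,\ \mathrm{gd}_{\vc[M_G]}N_G(M_G)\bigr\}.$$
For the first piece, torsion-freeness of $G$ with torsion-free finite-index subgroup $H$ gives $\gdf G=\mathrm{gd}\,G\le\max\{3,\cd G\}=\max\{3,\cd H\}\le\max\{3,\gdf H\}$, via Serre together with Eilenberg--Ganea/Stallings--Swan. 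For the second piece I would first verify $N_G(M_G)=N_G(M_H)$ and that $Q:=N_G(M_G)/M_G$ is torsion free (any torsion in $Q$ would again produce a strictly larger cyclic extension of $M_G$ inside $G$, contradicting maximality). In our torsion-free setting the family $\vc[M_G]$ on $N_G(M_G)$ reduces to the subgroups of $M_G$, so pulling back a free contractible model for $Q$ along the projection $N_G(M_G)\to Q$ supplies an $N_G(M_G)$-model for $E_{\vc[M_G]}N_G(M_G)$ of dimension $\gdf Q$.

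Finally I would bound $\gdf Q$ using that $Q$ contains $N_H(M_H)/M_H$ as a torsion-free finite-index subgroup. Applying the Hochschild--Serre spectral sequence to $1\to M_H\to N_H(M_H)\to N_H(M_H)/M_H\to 1$ together with $\cd M_H=1$ yields $\cd(N_H(M_H)/M_H)\le\cd N_H(M_H)-1\le\gdf H-1$, hence $\gdf Q\le\max\{3,\gdf H-1\}$ by Eilenberg--Ganea. Assembling everything gives $\gdvc G\le\max\{3,\gdf H\}\le\max\{3,\gdf H+1\}$, as required. The hardest part will be the uniqueness clause in $\ma_{\vci_G}$: controlling all cyclic extensions of $M_H$ inside the centralizer $C_G(M_H)$ and excluding infinite towers of roots, both of which rely on carefully transferring $\ma_{\vci_H}$ through the finite-index embedding $H\le G$.
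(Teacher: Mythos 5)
Your opening step---promoting $\ma_{\vci_H}$ to $\ma_{\vci_G}$---is false, and everything after it is built on that claim. The existence half of your argument (the chain/Zorn step, patched by intersecting with $H$) can be made to work, but the uniqueness half cannot. Concretely, let $G=\langle x,y\mid x^p=y^q\rangle$ be a torus knot group with $p,q\geq 2$ coprime. Its center $Z=\langle x^p\rangle=\langle y^q\rangle$ has quotient $\mathbb{Z}/p\ast\mathbb{Z}/q$; pulling back a free normal finite-index subgroup of that free product yields a normal finite-index subgroup $H\cong F_n\times\mathbb{Z}$ of $G$, which is torsion free, satisfies $\ma_{\vci_H}$ and has $\gdvc H<\infty$, so all hypotheses of the theorem hold. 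Yet $\langle x\rangle$ and $\langle y\rangle$ are infinite cyclic, both contain the infinite subgroup $Z$, and together generate $G$; hence any maximal infinite cyclic subgroup containing $\langle x\rangle$ is distinct from any containing $\langle y\rangle$, while both are $\sim$-equivalent to $Z$. So $G$ fails $\ma_{\vci_G}$. Your compositum argument breaks exactly here: two maximal cyclics $M_G,M_G'$ containing $V$ do both centralize $M_H$ (the inversion argument is fine), but they need not centralize each other, and $\langle M_G,M_G'\rangle$ need not be virtually cyclic---in the example it is all of $G$. Consequently the pushout you write down, glued along normalizers $N_G(M_G)$ with isotropy family ``subgroups of $M_G$'', is not the L\"uck--Weiermann pushout for $G$: without $\ma_{\vci_G}$ the attaching groups must be the commensurators $\comm_G[V]$, which can be strictly larger than every $N_G(M_G)$ (in the example $\comm_G[Z]=G$), and the family $\g_G[V]$ then contains infinitely many distinct maximal cyclics rather than the subgroups of a single one.

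The paper avoids this trap: it never asserts $\ma_{\vci_G}$, but instead proves (Lemma \ref{lem-1}) that $\comm_G[V]=N_G(U)$ for $U=(V\cap H)_{max_H}$, the maximal element taken inside $H$, and then bounds $\gd_{\g_G[V]}N_G(U)\leq \gdf(N_G(U)/U)$, using that $N_H(U)/U$ is torsion free, that the finite subgroups of $N_G(U)/U$ inject into $F$ (hence are cyclic when $G$ is torsion free), and Mart\'inez-P\'erez's bound (Theorem \ref{thm-euler}). If you want to salvage your outline, replace $N_G(M_G)$ by $N_G((V\cap H)_{max_H})$ throughout and work with the family $\g_G[V]$ there; your two-row spectral-sequence estimate then only bounds $\vcd(N_G(U)/U)$, and the passage from $\vcd$ plus control of the finite subgroups to a bound on $\gdf(N_G(U)/U)$ is precisely where the $+1$ and the $\max\{3,\cdot\}$ in the statement come from---your stronger conclusion $\max\{3,\gdf H\}$ is an artifact of the unjustified reduction to a torsion-free quotient.
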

 Also we will give a bound for $\gdvc G$ when $G$ has torsion. We remark that Theorem \ref{main} extends some of the results that we obtained in \cite{D-J-A},  in which we showed that in many cases, inequality (\refeq{eq-1})  holds for the whole mapping class group, we used that the mapping class groups have the property of uniqueness of roots and an extra condition. 
 
 Below are some applications of  Theorem \ref{main}. Let $S$ be a surface, denote by $B_n(S)$ the $n$-braid group of $S$. 
\begin{proposition}
If $n\geq 1$ and $S$ is an orientable closed surface of genus $g\geq 2$, then 
   $ \gdvc B_n(S)\leq n+2$.   
\end{proposition}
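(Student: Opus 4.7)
The plan is to invoke Theorem \ref{main} on the classical short exact sequence
\begin{equation*}
1\to P_n(S)\to B_n(S)\xrightarrow{\psi} S_n\to 1,
\end{equation*}
where $P_n(S)$ denotes the pure $n$-braid group of $S$ and $S_n$ the symmetric group. For $n=1$ the quotient $S_1$ is trivial, so Theorem \ref{main} does not apply and the case must be handled directly: $B_1(S)=\pi_1(S)$ is a closed orientable surface group, torsion-free and hyperbolic, whose centralizers of non-trivial elements are infinite cyclic, so $\pi_1(S)$ enjoys property $\ma_{\vci}$ and L\"uck--Weiermann gives $\gdvc \pi_1(S)\leq \gdf \pi_1(S)+1=3=1+2$.

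For $n\geq 2$ I apply Theorem \ref{main} with $G=B_n(S)$, $H=P_n(S)$, $F=S_n$; four hypotheses must be verified. First, $B_n(S)$ is torsion-free: by a theorem of Gon\c{c}alves--Guaschi (with antecedents in Fadell--Van Buskirk), this holds for every closed orientable surface of genus $g\geq 1$. Second, $P_n(S)$ has property $\ma_{\vci}$, which I would derive from the bi-orderability of $P_n(S)$ established by Gonz\'alez-Meneses---bi-orderability forces uniqueness of roots ($a^k=b^k$ with $k\neq 0$ implies $a=b$), and combined with torsion-freeness and finite generation this yields a unique maximal cyclic subgroup above every non-trivial cyclic subgroup. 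Third, $\gdf P_n(S)\leq n+1$: torsion-freeness of $P_n(S)$ gives $\gdf P_n(S)=\gd P_n(S)$, and the iterated Fadell--Neuwirth short exact sequences
\begin{equation*}
1\to F\to P_n(S)\to P_{n-1}(S)\to 1,
\end{equation*}
where $F=\pi_1(S\setminus\{n-1\ \text{points}\})$ is a finitely generated free group, yield inductively $\gd P_n(S)\leq \gd P_{n-1}(S)+1$ and hence $\gd P_n(S)\leq n+1$ starting from $\gd P_1(S)=2$. Fourth, $\gdvc P_n(S)<\infty$ is then automatic from the preceding two items via L\"uck--Weiermann.

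Theorem \ref{main} now delivers
\begin{equation*}
\gdvc B_n(S)\leq \max\{3,\gdf P_n(S)+1\}\leq \max\{3,n+2\}=n+2,
\end{equation*}
the last equality because $n+2\geq 4$ for $n\geq 2$. I expect the main obstacle to be verifying property $\ma_{\vci}$ for $P_n(S)$: the torsion-freeness of $B_n(S)$ and the Fadell--Neuwirth induction for $\gdf$ are essentially quotable, and L\"uck--Weiermann closes the argument mechanically once $\ma_{\vci}$ is in hand.
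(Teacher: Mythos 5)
Your overall strategy (find a finite-index normal subgroup with property $\ma_{\vci}$ and feed it to Theorem \ref{main}) is the right one, and your $n=1$ case, your torsion-freeness citation, and your Fadell--Neuwirth computation $\gdf P_n(S)=\gd P_n(S)\leq n+1$ are all fine. The gap is in your second item: the chain of implications ``bi-orderable $\Rightarrow$ unique roots $\Rightarrow$ (with torsion-freeness and finite generation) property $\ma_{\vci}$'' is not valid. Property $\ma_{\vci}$ for a torsion-free group demands two things: \emph{uniqueness} of a maximal infinite cyclic subgroup over a given one, and, before that, its \emph{existence}, i.e.\ that ascending chains $\langle a\rangle\subsetneq\langle a_1\rangle\subsetneq\cdots$ of cyclic subgroups terminate (no non-trivial element is infinitely divisible). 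Unique roots gives you nothing toward existence: the Baumslag--Solitar group $BS(1,2)=\langle a,t\mid tat^{-1}=a^2\rangle$ is finitely generated, torsion-free and bi-orderable (hence has unique roots), yet $a=(t^{-1}at)^2=(t^{-2}at^2)^4=\cdots$ is infinitely divisible, so $\langle a\rangle$ lies in no maximal cyclic subgroup and $\ma_{\vci}$ fails. So you would still owe a genuine argument that elements of $P_n(S)$ admit only boundedly deep roots; this is exactly the ``extra condition'' beyond uniqueness of roots that the introduction of this paper alludes to in connection with \cite{D-J-A}, and it is the step you yourself flag as the main obstacle.

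The paper circumvents this entirely by a different choice of finite-index subgroup. It regards $B_n(S)$ as a subgroup of $\Mod(S-Q_n)$ via the Birman exact sequence (\ref{suc-bn-mod}), invokes Theorem \ref{thm-max-mod} (the congruence subgroup $\Mod(S-Q_n)[m]$, $m\geq 3$, has property $\ma_{\vci}$) together with Lemma \ref{subgr-max} (the property passes to subgroups), and takes $H_m=B_n(S)\cap\Mod(S-Q_n)[m]$ as the finite-index normal subgroup; this is packaged as Corollary \ref{subtf-mod}, which combined with $\cd B_n(S)=n+1$ gives the bound. If you want to salvage your route through $P_n(S)$, the cleanest fix is the same one: $P_n(S)$ is a subgroup of $\Mod(S-Q_n)$, so replace it by its intersection with a congruence subgroup (or simply quote Lemma \ref{subgr-max} for $P_n(S)\cap\Mod(S-Q_n)[m]$) rather than trying to derive $\ma_{\vci}$ from orderability.
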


\begin{proposition}
If $A$ is a right-angled Artin group, then $$\gdvc A\leq \gd A+1.$$ 
 In addition, for any subgroup $H\subseteq A$,  $\gdvc H\leq \gd H+1$. 
\end{proposition}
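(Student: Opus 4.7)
The plan is to apply L\"uck--Weiermann's inequality (\ref{eq-1}) cited in the introduction: once a group $G$ is known to satisfy property $\ma_{\vci_G}$, one automatically gets $\gdvc G \leq \gdf G + 1$. A RAAG is torsion-free, so every subgroup $H\subseteq A$ is torsion-free, hence $\gdf H = \gd H$, and the proposition reduces to verifying property $\ma_{\vci_H}$ for every such $H$.

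First I would check property $\ma_{\vci_A}$ for $A$ itself. Torsion-freeness reduces the statement to: each infinite cyclic subgroup $\langle g\rangle\leq A$ is contained in a unique maximal cyclic subgroup. Uniqueness of $k$-th roots in $A$ follows from biorderability of RAAGs (Duchamp--Krob), so $h^k=g$ determines $h$ uniquely. Termination of an ascending chain $\langle g\rangle\subseteq\langle h_1\rangle\subseteq\langle h_2\rangle\subseteq\cdots$ follows from a positive-integer valued conjugation-invariant length $\ell$ on $A\setminus\{1\}$ (for instance the translation length on the Salvetti cube complex, or the cyclically reduced word length) which scales under powers as $\ell(h^k)=k\,\ell(h)$; writing $g=h_i^{k_i}$ forces $k_i\leq\ell(g)$, bounding the depth of the chain and forcing it to stabilise.

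Next I would verify that the property descends to any subgroup $H\leq A$. Given infinite cyclic $\langle g\rangle\leq H$, let $\langle h\rangle\leq A$ be the unique maximal cyclic of $A$ containing it, and set $M=\langle h\rangle\cap H$. Being a subgroup of $\langle h\rangle\cong\z$ containing $g$, one has $M=\langle h^m\rangle$ for a unique $m\geq 1$. Any cyclic subgroup of $H$ containing $\langle g\rangle$ is a cyclic subgroup of $A$ containing $\langle g\rangle$, hence lies inside $\langle h\rangle$ by uniqueness in $A$; being contained in $H$ it actually lies inside $M$. Thus $M$ is the unique maximal cyclic of $H$ containing $\langle g\rangle$, so $H$ has property $\ma_{\vci_H}$. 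Applying L\"uck--Weiermann yields $\gdvc H\leq\gdf H+1=\gd H+1$.

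The main obstacle is the first step, namely the termination of ascending cyclic chains in $A$. Torsion-freeness alone is not enough (cf.\ $\mathbb{Q}$), and one must genuinely exploit the combinatorial structure of RAAGs --- biorderability together with the discrete cubical action on the Salvetti complex. Once this is secured, the descent of property $\ma_{\vci}$ to subgroups and the invocation of L\"uck--Weiermann are routine formalities.
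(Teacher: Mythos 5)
Your route is genuinely different from the paper's. The paper never attempts to prove that $A$ itself has property $\ma_{\vci_A}$: it embeds $A$ into a mapping class group via Proposition \ref{raag-mcg} (Koberda), intersects with a congruence subgroup $\Mod(S)[m]$ to obtain a finite-index normal subgroup of $A$ enjoying property $\ma$, and then invokes the main Theorem \ref{thm-ex-max}; this is exactly Corollary \ref{subtf-mod}. Your plan --- verify $\ma_{\vci_A}$ directly from the combinatorics of $A$ and quote inequality (\ref{eq-1}) --- would, if completed, give a more self-contained argument that bypasses both Koberda's theorem and Theorem \ref{thm-ex-max}. Your reduction to torsion-free subgroups ($\gdf H=\gd H$), your descent of property $\ma$ to subgroups (which is the paper's Lemma \ref{subgr-max}), and the final appeal to L\"uck--Weiermann are all fine.

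However, there is a genuine gap exactly where the real work lies, and it is not where you locate it. Uniqueness of $k$-th roots (for each fixed $k$) together with termination of ascending chains of cyclic subgroups yields the \emph{existence} of a maximal infinite cyclic subgroup containing a given $\langle g\rangle$, but not its \emph{uniqueness}, which is what $\ma_{\vci_A}$ requires. Unique roots only compares overgroups whose indices divide one another: if $g=h_1^{k_1}=h_2^{k_2}$ with $k_1\mid k_2$, then $h_1=h_2^{k_2/k_1}$ and $\langle h_1\rangle\subseteq\langle h_2\rangle$; but nothing you have said excludes two maximal cyclic overgroups of coprime indices, say $g=h_1^{2}=h_2^{3}$ with $\langle h_1\rangle$ and $\langle h_2\rangle$ incomparable. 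This failure mode is real in torsion-free groups with terminating chains: in the trefoil group $\langle a,b\mid a^2=b^3\rangle$ the subgroup $\langle a^2\rangle=\langle b^3\rangle$ lies in the two distinct maximal cyclic subgroups $\langle a\rangle$ and $\langle b\rangle$. To close the gap for a right-angled Artin group you must show that all roots of a given element lie in a single cyclic subgroup; the standard route is Servatius' Centralizer Theorem, which for cyclically reduced $g=g_1^{n_1}\cdots g_r^{n_r}$ (pure factor decomposition) forces every root of $g$ to be of the form $g_1^{n_1/k}\cdots g_r^{n_r/k}$ with $k$ dividing $\gcd(n_1,\dots,n_r)$, so that the roots form a chain. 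With that lemma added, your argument goes through; without it, the central claim that $A$ satisfies $\ma_{\vci_A}$ is unproved.
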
 

\section{Classifying spaces for families}

A method for constructing  a model for $\evc$ is to start from a model for $\efin$ and then promote the action to a larger space to get a model for $\evc$.
In this section we will describe the model of L\"uck and Weiermann given in \cite{luck-weiermann}.
  
Define an equivalence relation on   the set $\vci_G=\vc_G -\fin_G$ of  infinite virtually cyclic subgroups of $G$. Given $V,U\in \vci_G$  
\begin{align}\label{rel-eq}
 V\sim U \;\text{if and only if } \; |V\cap U|=\infty .
 \end{align} 
Let $[\vci_G]$ denote the set of equivalence classes and by $[V]$ the equivalence class of  $V\in \vci_G$.     
For  $[V]\in \vci_G$ define
  \begin{align} \label{def-comm}
   \comm_{G}[V]:=\{g\in G \mid [gVg^{-1}]=[ V] \}, 
  \end{align}
   the commensurator of $V$ in $G$. 
 Define a family of subgroups of $\comm_G[V]$ as  
   \begin{align}\label{gv}
   \g_G[V]=\{ U\in \vci_{\comm_G[V]} \mid  [V]=[U]\} \cup \fin_{ \comm_G[V]}.
   \end{align}
   

\begin{thm}\cite[Thm.2.3]{luck-weiermann} \label{thm-luck-weierm}
Let $\vci_G$ and $\sim$ be as above. Let $I$ be a complete system of representatives $[V]$ of the $G$-orbits in $[\vci_G]$ under the $G$-action coming from conjugation. Choose arbitrary $\comm_G[V]$-CW-models for $\underline{E}(\comm_G[V])$,  $E_{\g[V]}(\comm_G[V])$ and an arbitrary $G$-CW-model for $\underline{E}(G)$. Define $X$ a $G$-CW-complex by the cellular $G$-pushout
\begin{align}\label{pushout-l}
\xymatrix{
                \coprod_{[V]\in I} G \times_{\comm_G[V]} \underline{E}(\comm_G[V])
                \ar[d]^{\coprod_{[V]\in I} id_G \times_{\comm_G[V]}f_{[V]}}
                \ar[r]^(0.75){i}
                &\efin
                \ar[d]
                \\
                \coprod_{[V]\in I} G \times_{\comm_G[V]} E_{\g[V]}(\comm_G[V])
                \ar[r]
                & X
}
\end{align}
such that $f_{[V]}$ is a cellular $\comm_G[V]$-map for every $[V]\in {I}$ and $i$ is an inclusion of $G$-CW-complexes, or such that every map $f_{[V]}$ is an inclusion of $\comm_G[V]$-CW-complexes for every $[V]\in I$  and $i$ is a cellular $G$-map. 
Then $X$ is a model for $\evc.$
\end{thm}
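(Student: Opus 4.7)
The plan is to verify the definition of a model for $\evc$ directly: show that $X^H$ is contractible when $H \in \vc_G$ and empty when $H \notin \vc_G$. Since \eqref{pushout-l} is a cellular $G$-pushout in which one of the parallel arrows is an inclusion of CW-complexes, taking $H$-fixed points commutes with the pushout, so $X^H$ is the pushout of the $H$-fixed point sets of the other three corners.

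The key tool is the standard identity, valid for a subgroup $K \leq G$ and a $K$-CW-complex $Y$,
\[
(G \times_K Y)^H = \coprod_{gK \in (G/K)^H} Y^{g^{-1}Hg},
\]
with $(G/K)^H = \{gK : g^{-1}Hg \leq K\}$. Applied with $K = \comm_G[V]$, this reduces everything to fixed point sets on the $\comm_G[V]$-CW-models $\underline{E}(\comm_G[V])$ and $E_{\g[V]}(\comm_G[V])$.

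I would then split into three cases. If $H \notin \vc_G$, every contributing fixed point set is empty, because $H$ and its conjugates lie in neither $\fin$ nor in any $\g[V]$, so $X^H = \emptyset$. If $H \in \fin_G$, then $\efin^H$ is contractible, and for every relevant $g$ the conjugate $g^{-1}Hg$ is finite, so it lies both in $\fin_{\comm_G[V]}$ and in $\g[V]$; hence both fixed point sets in the bottom-left and top-right corners are contractible and, together with the cofibration hypothesis on the horizontal/vertical map, the pushout $X^H$ is contractible. The decisive case is $H \in \vci_G$. Here the upper-left corner gives nothing, since for any $g$ with $g^{-1}Hg \leq \comm_G[V]$ the group $g^{-1}Hg$ is infinite and so is not in $\fin_{\comm_G[V]}$. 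In the lower-left corner, a coset $g\comm_G[V]$ contributes a non-empty fixed point set only if $[g^{-1}Hg] = [V]$; combining this with the fact that $[H]$ lies in the $G$-orbit of a unique representative $[V_0] \in I$, and that any two $g_1,g_2$ with $[g_i^{-1}Hg_i] = [V_0]$ satisfy $g_1^{-1}g_2 \in \comm_G[V_0]$, one concludes that $X^H$ equals a single copy of the contractible space $E_{\g[V_0]}(\comm_G[V_0])^{g^{-1}Hg}$.

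The main obstacle, and the only delicate point, is the combinatorics in the last case: verifying that exactly one representative $[V_0] \in I$ and exactly one coset of $\comm_G[V_0]$ contribute a non-empty term. This relies on the definition of $I$ (one representative per $G$-orbit in $[\vci_G]$), the characterisation of $\comm_G[V]$ as the $G$-stabiliser of $[V]$ under the conjugation action on $[\vci_G]$, and the observation that any infinite virtually-cyclic subgroup commensurable with $V$ is automatically contained in $\comm_G[V]$. Once this uniqueness is secured and one invokes the standard commutation of fixed points with cellular pushouts along cofibrations, the theorem follows.
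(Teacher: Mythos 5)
This theorem is quoted from L\"uck--Weiermann and the paper supplies no proof of its own, so there is no internal argument to compare against; your sketch reproduces, correctly and in essentially the same order, the standard proof from \cite[Thm. 2.3]{luck-weiermann}: commute $H$-fixed points with the cellular pushout, apply $(G\times_K Y)^H=\coprod_{gK\in (G/K)^H}Y^{g^{-1}Hg}$, and analyse the three cases $H\notin\vc_G$, $H\in\fin_G$, $H\in\vci_G$. The one imprecision is in the finite case: the $H$-fixed sets of the two left-hand corners are disjoint unions of contractible spaces indexed by the same set $\coprod_{[V]\in I}(G/\comm_G[V])^H$, and hence not themselves contractible in general; what the argument actually uses is that the left vertical map is therefore a componentwise homotopy equivalence on $H$-fixed points, so pushing out along the cofibration $i^H$ yields $X^H\simeq (\efin)^H\simeq *$. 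Your treatment of the decisive case $H\in\vci_G$ (exactly one $[V_0]\in I$ and exactly one coset of $\comm_G[V_0]$ contributing a nonempty, contractible summand) is exactly the right combinatorial point and is argued correctly.
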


\begin{rem}\cite[Rem .2.5]{luck-weiermann} \label{rem-luck-weierm}
  Suppose that there exists  $n$ such that  $\gdf G\leq n$,  and   for each $[V]\in \mathfrak{I}$, 
$$ \gdf \comm_{G}[V] \leq n-1 \; \text{ and }\;
 \gd_{\g[V]}\comm_{G}[V] \leq n,
$$
 then $\gdvc G\leq n$. 
\end{rem}

\subsection{Proper  dimension}
The following Theorems will be used for the proof of the main Theorem, which is given in Section \ref{sec-vc}. 

Let $G$ be a group, and $F\in \fin_G$ a finite subgroup. The {\em length} $l(F)$ of $F$ is defined as the largest natural number $k$ for which there is a chain $1=F_0<F_1 < \cdots < F_k =F$.
The {\em length} of $G$ is 
$$l(G)= \sup \{l(F) \mid F\in \fin_G \}.$$
\begin{thm}\cite[Thm. 3.10, Lem. 3.9]{Conchita} \label{thm-gd-l}
 Suppose that $ \gdf G < \infty $. If $l(G)$ is finite, then    
\begin{align*}
\gdf G \leq \max\{3, \vcd G+l(G)\},
\end{align*}  
where $\vcd(G)$ denotes virtual cohomological dimension of $G$. 
\end{thm}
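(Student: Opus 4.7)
The plan is to reduce the geometric statement $\gdf G \leq \max\{3,\vcd G + l(G)\}$ to a purely homological inequality of Bredon cohomological dimensions, and then to prove that inequality by induction on $l(G)$. The first reduction is L\"uck--Meintrup's theorem: whenever $\gdf G < \infty$ one has
$$
\gdf G \;\leq\; \max\{3,\, \cd_{\fin_G} G\},
$$
with the $\max$ against $3$ accounting for the failure of the Eilenberg--Ganea property in dimension $2$ for proper actions. This identifies the origin of the $3$ in the statement and reduces the theorem to establishing
$$
\cd_{\fin_G} G \;\leq\; \vcd G + l(G).
$$

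I would prove this Bredon-dimensional bound by induction on $l(G)$. The base case $l(G) = 0$ is immediate: $G$ is then torsion-free, $\cd_{\fin_G} G = \cd G$, and $\cd G = \vcd G$. For the inductive step, fix a torsion-free normal subgroup $\Gamma \trianglelefteq G$ of finite index with $\cd \Gamma = \vcd G$, and observe that for every non-trivial $F \in \fin_G$ the Weyl group $W_G F = N_G F / F$ satisfies $l(W_G F) \leq l(G) - 1$, so the inductive hypothesis controls the Bredon-dimensional data attached to each $W_G F$. Using a pushout over conjugacy classes of non-trivial finite subgroups, very much in the spirit of the pushout in Theorem \ref{thm-luck-weierm} but with the family of infinite virtually cyclic subgroups replaced by the family of non-trivial finite subgroups, one assembles a model for $\efin$ out of a $\Gamma$-model for $E\Gamma$ of dimension $\vcd G$ by attaching the fixed-point data of the conjugacy classes of non-trivial finite subgroups, each class contributing at most one additional unit of dimension.

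The hard part is the bookkeeping in this induction: one has to show that the contributions of the various conjugacy classes of non-trivial finite subgroups collectively raise the Bredon cohomological dimension by at most $l(G)$, rather than by an amount depending on the number of conjugacy classes or on the orders involved. This step is typically carried out with a Mayer--Vietoris or isotropy spectral sequence whose $E_2$-page collects the Bredon cohomology of the normalizers $N_G F$ and possesses a vanishing range powered by the length hypothesis; combining this vanishing range with the inductive bound on each $W_G F$ yields the desired sharp additive estimate $\cd_{\fin_G} G \leq \vcd G + l(G)$. Once this homological inequality is in hand, the L\"uck--Meintrup upgrade recovers $\gdf G \leq \max\{3,\vcd G + l(G)\}$ and completes the proof.
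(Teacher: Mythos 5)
The paper does not prove this statement: it is quoted verbatim from Mart\'inez-P\'erez \cite{Conchita} (Thm.~3.10 and Lem.~3.9 there), so there is no in-paper proof to compare against. Judged on its own terms, your proposal gets the architecture right --- the reduction $\gdf G \leq \max\{3, \cd_{\fin_G} G\}$ via L\"uck--Meintrup is exactly where the $3$ comes from, the target inequality $\cd_{\fin_G} G \leq \vcd G + l(G)$ is the correct homological core, the base case $l(G)=0$ is handled correctly, and the observation that $l(W_G F) \leq l(G) - l(F) \leq l(G)-1$ for non-trivial finite $F$ is the right induction handle.

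However, there is a genuine gap: the inductive step is asserted, not proved, and it is precisely the mathematical content of the cited lemma. Your claim that each conjugacy class of non-trivial finite subgroups ``contribut[es] at most one additional unit of dimension,'' justified by ``a Mayer--Vietoris or isotropy spectral sequence whose $E_2$-page \ldots possesses a vanishing range powered by the length hypothesis,'' is exactly the point at issue. The naive assembly of $\efin$ from $E\Gamma$ and the singular strata does not obviously give an additive bound --- L\"uck's earlier estimate obtained this way is worse than $\vcd G + l(G)$, and Mart\'inez-P\'erez's improvement to an additive bound required a specific algebraic argument with resolutions over the orbit category, not a generic spectral-sequence vanishing range. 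You would need to exhibit the spectral sequence (or filtration) explicitly, identify its $E_2$-page in terms of $\cd_{\fin}(W_G F)$ and ordinary cohomology of $\Gamma$, and prove the vanishing in degrees above $\vcd G + l(G)$; as written, the proposal assumes the conclusion at the step where all the work lives. A secondary, smaller issue: the pushout you invoke ``in the spirit of Theorem \ref{thm-luck-weierm}'' is designed for the passage $\fin \rightsquigarrow \vc$ using the commensuration equivalence relation; adapting it to the passage $\{1\} \rightsquigarrow \fin$ requires a different indexing (by conjugacy classes of finite subgroups ordered by inclusion, with attention to overlaps between strata), and you should not present it as a routine substitution.
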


\begin{thm}\cite[Thm. 2.5]{Conchita-Eul}\label{thm-euler}
	Let $G$ be a group such that any finite  subgroup is nilpotent and   $\vcd G <\infty$.  Let  $n=\max_{F\in \fin_G} \{\vcd G + rk(W_G F) \}$, 
	where  $rk(\cdot)$ denotes the biggest rank of a finite  elementary abelian subgroup, then 
	$\; \gdf G \leq \max\{3,n\}. $
\end{thm}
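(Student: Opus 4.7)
The strategy is to reduce the estimate to Theorem~\ref{thm-gd-l} applied not to $G$ itself but to the Weyl groups $W_G F$ of its finite subgroups, assembling the resulting models via a cellular pushout in the spirit of Theorem~\ref{thm-luck-weierm} (but for $\efin$ instead of $\evc$). The nilpotence hypothesis is then used to replace the length $l(\cdot)$ appearing in Theorem~\ref{thm-gd-l} by the rank of elementary abelian subgroups.

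\textbf{Construction.} I would first choose a complete system of representatives of conjugacy classes of nontrivial finite subgroups $F\le G$, and for each such $F$ construct a low-dimensional model $Y_F$ for $\efin W_G F$. Gluing these into a model for $EG_0$, where $G_0$ is a torsion-free finite-index subgroup with $\cd G_0=\vcd G$, via a cellular $G$-pushout along fixed-point sets produces a model $X$ for $\efin G$ whose dimension is controlled by $\vcd G$ together with the dimensions of the $Y_F$. Inductively on the length of a chain of finite subgroups, this reduces the problem to a uniform bound on $\gdf W_G F$ of the form $\vcd G+rk(W_G F)$.

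\textbf{Weyl groups.} Since any torsion-free finite-index subgroup of $N_G F/F$ inherits a bound on $\cd$ from $G$, one has $\vcd W_G F\le \vcd G$. Applying Theorem~\ref{thm-gd-l} to $W_G F$ gives
$\gdf W_G F\le \max\{3,\ \vcd G+l(W_G F)\},$
and I would then sharpen this using the nilpotence hypothesis. In a finite nilpotent subgroup, the nontrivial center at each stage of the upper central series allows a maximal chain to be refined into a tower of elementary abelian extensions. A Smith-theoretic analysis of the corresponding fixed-point sets should show that only the rank of the elementary abelian subquotients contributes genuine dimensional growth, effectively replacing $l(W_G F)$ by $rk(W_G F)$.

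\textbf{Main obstacle.} The decisive step is the transition from length to rank in the previous paragraph. A direct application of Theorem~\ref{thm-gd-l} yields only the weaker bound with $l(W_G F)$, which can far exceed $rk(W_G F)$ for $p$-groups (for instance, $l(\z/p^k)=k$ while the rank is~$1$). Reaching the stated bound requires a dedicated argument specific to nilpotent finite subgroups, either via a Quillen-type stratification spectral sequence showing that elementary abelian subgroups control Bredon cohomological dimension, or via a direct equivariant construction that telescopes central extensions within nilpotent $p$-subgroups. Once this refinement is in hand, reassembling the pieces through the pushout from the construction step yields $\gdf G\le \max\{3,n\}$.
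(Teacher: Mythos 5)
First, note that the paper does not prove this statement at all: it is imported verbatim from \cite[Thm.~2.5]{Conchita-Eul}, so there is no internal proof to measure your attempt against. Judged on its own terms, the skeleton of your proposal is reasonable and correctly locates the difficulty. The reduction to the Weyl groups $W_G F$, the observation that $\vcd W_G F\le \vcd G$ (via a torsion-free finite-index subgroup of $G$ meeting $N_G F$ in a subgroup that injects into $N_GF/F$), and the assembly of $\efin G$ by a pushout over conjugacy classes of finite subgroups are all sound; they essentially retrace the mechanism behind Theorem~\ref{thm-gd-l} from \cite{Conchita}.

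The problem is that the step you yourself flag as the ``main obstacle'' is not a refinement to be filled in later --- it is the entire content of the theorem, namely the replacement of $l(W_GF)$ by $rk(W_GF)$, and your sketch does not supply it. Running your construction as written returns $\max\{3,\vcd G+l(G)\}$, i.e.\ Theorem~\ref{thm-gd-l} again. Of the two repairs you gesture at, the Smith-theoretic one would not work: Smith theory constrains fixed-point sets of $p$-group actions on mod~$p$ homology spheres and disks and yields nonvanishing or lower-bound statements, not upper bounds on the dimension of a classifying space, so ``a Smith-theoretic analysis of the fixed-point sets'' cannot shrink a length to a rank. The Quillen-stratification idea does point in the right direction --- Quillen's theorem gives a homogeneous system of parameters in $H^*(Q;\mathbb{F}_p)$ of length equal to the $p$-rank of a finite group $Q$ --- but turning that into a bound on the Bredon cohomological dimension $\cd_{\fin_G}G$ (and thence on $\gdf G$ via $\gdf G\le\max\{3,\cd_{\fin_G}G\}$) is exactly the Euler-class machinery that \cite{Conchita-Eul} is written to develop: one manufactures, for each finite nilpotent isotropy group, cohomology classes (equivalently, joins of representation spheres) whose number is governed by the rank rather than the length, and the nilpotence hypothesis enters because a finite nilpotent group is the direct product of its Sylow subgroups, so every elementary abelian subgroup sits inside a single Sylow factor and the rank is the right invariant to induct on. None of this argument is present in your proposal, so as it stands there is a genuine gap precisely at the point where the statement differs from Theorem~\ref{thm-gd-l}.
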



\section{Virtually-cyclic dimension}\label{sec-vc}

We say that $G$ has \emph{property $\ma_{\vci_G}$} if
every  $V\in \vci_G$ is contained in a unique maximal $V_{max_G}\in \vci_G$.
 
 Suppose $G$ satisfies  $\ma_{\vci_G}$, let $V,U\in \vci_G$, thus $V\sim U$ if only if 
$V_{max_G} =U_{max_G}$, therefore  
$$\comm_G[V]=N_G(V_{max_G})$$
is the normalizer of $V_{max_G}$ in $G$ (see \cite{luck-weiermann}). 
\begin{lem}\label{lem-0}
If $G$ has property $\ma_{\vci_G}$ and  $U\in \vci_G$, then  \begin{align*}(gUg^{-1})_{max_G} =gU_{max_G}g^{-1}.
\end{align*}
 \end{lem}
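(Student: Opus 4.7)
The plan is to prove both inclusions $(gUg^{-1})_{max_G} \subseteq gU_{max_G}g^{-1}$ and $gU_{max_G}g^{-1} \subseteq (gUg^{-1})_{max_G}$ using only the uniqueness clause of property $\ma_{\vci_G}$ and the fact that conjugation by $g$ is an automorphism of $G$.

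First I would observe that conjugation by any $g \in G$ is an automorphism of $G$, so it sends $\vci_G$ to itself and preserves inclusions. In particular, since $U \subseteq U_{max_G}$ with $U_{max_G} \in \vci_G$, we get $gUg^{-1} \subseteq gU_{max_G}g^{-1}$ with $gU_{max_G}g^{-1} \in \vci_G$. By definition of the maximal element containing $gUg^{-1}$ (which exists by $\ma_{\vci_G}$ applied to $gUg^{-1}$), this gives
\begin{equation*}
gU_{max_G}g^{-1} \subseteq (gUg^{-1})_{max_G}.
\end{equation*}

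For the reverse inclusion I would conjugate back by $g^{-1}$. The subgroup $g^{-1}(gUg^{-1})_{max_G}\,g$ lies in $\vci_G$ and contains $g^{-1}(gUg^{-1})g = U$; by the maximality of $U_{max_G}$ among elements of $\vci_G$ containing $U$, we deduce $g^{-1}(gUg^{-1})_{max_G}\,g \subseteq U_{max_G}$, and conjugating by $g$ yields $(gUg^{-1})_{max_G} \subseteq gU_{max_G}g^{-1}$. Combining both inclusions gives the desired equality.

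The only thing to be careful about is that property $\ma_{\vci_G}$ must be invoked twice: once to know $(gUg^{-1})_{max_G}$ actually exists (so we have something to compare against), and once through the defining maximality of $U_{max_G}$ to collapse the reverse inclusion. There is no real obstacle here, since the whole argument is just transport of structure by an inner automorphism; the statement is essentially the observation that the assignment $U \mapsto U_{max_G}$ is $G$-equivariant with respect to conjugation.
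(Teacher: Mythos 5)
Your two--inclusion argument reaches the right conclusion, but both ``by maximality'' steps lean on a reading of $\ma_{\vci_G}$ that is stronger than what is stated. The property says each $U\in\vci_G$ is contained in a \emph{unique maximal} element of $\vci_G$; here ``maximal'' means not properly contained in any other member of $\vci_G$, not that $U_{max_G}$ is a \emph{maximum} (greatest element) of the set of members of $\vci_G$ containing $U$. So from ``$W\in\vci_G$ contains $U$ and $U_{max_G}$ is maximal'' you cannot directly conclude $W\subseteq U_{max_G}$, and likewise $gUg^{-1}\subseteq gU_{max_G}g^{-1}$ does not by itself give $gU_{max_G}g^{-1}\subseteq (gUg^{-1})_{max_G}$ ``by definition.'' Both gaps are bridgeable with one extra line: given $W\in\vci_G$ with $U\subseteq W$, apply $\ma_{\vci_G}$ to $W$ to obtain $W\subseteq W_{max_G}$; then $W_{max_G}$ is a maximal element of $\vci_G$ containing $U$, so uniqueness forces $W_{max_G}=U_{max_G}$ and hence $W\subseteq U_{max_G}$. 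Inserting this observation (in both directions) makes your proof complete, and it shows that under $\ma_{\vci_G}$ the element $U_{max_G}$ really is the maximum of the up-set of $U$ in $\vci_G$.

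The paper sidesteps the issue with a shorter argument: it first shows that $gU_{max_G}g^{-1}$ is itself a maximal element of $\vci_G$ (if $gU_{max_G}g^{-1}\subseteq V$ with $V\in\vci_G$, then $U_{max_G}\subseteq g^{-1}Vg$, so $U_{max_G}=g^{-1}Vg$ by maximality of $U_{max_G}$, whence $V=gU_{max_G}g^{-1}$), notes that this maximal element contains $gUg^{-1}$, and concludes $(gUg^{-1})_{max_G}=gU_{max_G}g^{-1}$ in a single application of the uniqueness clause. That version uses only the literal statement of $\ma_{\vci_G}$ and avoids the maximal-versus-maximum distinction entirely; yours, once patched as above, proves the same equality and additionally records the (true and occasionally useful) maximum property along the way.
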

\begin{proof}
We first claim that $gU_{max_G}g^{-1}$ is maximal in $\vci_G$: let $V\in \vci_G$  such that    $gU_{max_G}g^{-1} \subseteq V$, hence   $U_{max_G}\subseteq g^{-1}Vg$ and by maximality we have that  $U_{max_G}=g^{-1}Vg$.  In this way,  $gU_{max_G}g^{-1} =V$.   So we conclude that $(gUg^{-1})_{max_G} =gU_{max_G}g^{-1}$ by the uniqueness requirement of the property $\ma_{\vci_G}$.

\end{proof}

\begin{lem} \label{lem-1}
Let $1 \to H\to G \xrightarrow{\psi} F\to 1$ be a short exact sequence of groups, such that $H$ satisfies property $\ma_{\vci_H}$ and $F$ is finite. Let $V\in \vci_G$  and $(V\cap H)_{max_H}\in \vci_H$ be the maximal containing $(V\cap H)$,  then 
  $$\comm_G[V]=N_G((V\cap H)_{max_H}).$$
 \end{lem}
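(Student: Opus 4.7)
The plan is to establish the equality by proving the two inclusions $\comm_G[V]\subseteq N_G((V\cap H)_{max_H})$ and $N_G((V\cap H)_{max_H})\subseteq \comm_G[V]$ separately. Two preliminary observations will be useful. First, since $F$ is finite, the intersection $V\cap H$ has finite index in $V$, so $V\cap H\in\vci_H$ and the maximal $(V\cap H)_{max_H}$ is well-defined. Second, because $H$ is normal in $G$, conjugation by any $g\in G$ restricts to an automorphism of $H$ that preserves both $\vci_H$ and containment; this allows the proof of Lemma~\ref{lem-0} to go through verbatim for arbitrary $g\in G$, giving
\[
(gUg^{-1})_{max_H}=g\,U_{max_H}\,g^{-1}
\]
for every $U\in\vci_H$ and every $g\in G$, not just $g\in H$.

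For the inclusion $\comm_G[V]\subseteq N_G((V\cap H)_{max_H})$, I would take $g\in\comm_G[V]$, so that $gVg^{-1}\cap V$ is infinite. Intersecting with $H$ and using normality, $g(V\cap H)g^{-1}\cap (V\cap H)=gVg^{-1}\cap V\cap H$ has finite index in $gVg^{-1}\cap V$, hence is infinite. Thus $g(V\cap H)g^{-1}\sim V\cap H$ in $\vci_H$, and by the characterization of $\sim$ under $\ma_{\vci_H}$ recalled just before Lemma~\ref{lem-0} together with the extension of that lemma above, this forces $g(V\cap H)_{max_H}g^{-1}=(V\cap H)_{max_H}$, i.e., $g\in N_G((V\cap H)_{max_H})$.

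For the reverse inclusion, I would take $g\in N_G((V\cap H)_{max_H})$. Then $V\cap H$ and $g(V\cap H)g^{-1}=gVg^{-1}\cap H$ are both infinite subgroups of the infinite virtually cyclic group $(V\cap H)_{max_H}$, so both have finite index in it; their intersection therefore has finite index and in particular is infinite. Since this intersection lies inside $V\cap gVg^{-1}$, we conclude that $|V\cap gVg^{-1}|=\infty$ and $g\in\comm_G[V]$.

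The main obstacle is conceptual rather than computational: one must justify applying Lemma~\ref{lem-0} to conjugation by elements of $G\setminus H$. This rests squarely on the normality of $H$, which makes $c_g$ an automorphism of $H$; without it, conjugation would not even preserve $\vci_H$ and the argument would collapse. Everything else reduces to routine bookkeeping with indices, using the finiteness of $F$ to pass freely between $V$ and $V\cap H$.
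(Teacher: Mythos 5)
Your proof is correct and follows essentially the same route as the paper's, which compresses the argument into a chain of equivalences: $[gVg^{-1}]=[V]$ iff $[g(V\cap H)g^{-1}]=[V\cap H]$ iff the maximals agree, and then invokes Lemma \ref{lem-0} to rewrite $(g(V\cap H)g^{-1})_{max_H}$ as $g(V\cap H)_{max_H}g^{-1}$. Your version is somewhat more careful, in particular in spelling out the finite-index bookkeeping behind the first equivalence and in explicitly justifying that Lemma \ref{lem-0} applies to conjugation by $g\in G\setminus H$ via normality of $H$ — a point the paper uses implicitly.
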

\begin{proof} Let $g\in G$,  we have that
\begin{align}
[gVg^{-1}]=[ V] &\text{ if and only if } \; [g(V\cap H)g^{-1}]=[V\cap H] \nonumber
\\
&\text{ if and only if }\; (g(V\cap H)g^{-1})_{max_H}=(V\cap H)_{max_H}, \label{eq-vcaph}
\end{align}
by Lemma \ref{lem-0} $(g(V\cap H)g^{-1})_{max_H}=g(V\cap H)_{max_H} g^{-1}$, then 
equality (\ref{eq-vcaph}) holds  if and only if  $g(V\cap H)_{max_H} g^{-1}=(V\cap H)_{max_H}$.

\end{proof}

\begin{thm}\label{thm-ex-max}
Let $1 \to H\to G \xrightarrow{\psi} F\to 1$ be a short exact sequence of groups where $F$ is a non-trivial finite group.  Suppose that $H$ has property $\ma_{\vci_H}$ and $\gdvc H<\infty$, then 
 $\gdvc G\leq \max\{3,\gdf H +l(F)\}$.  
 If in addition $G$ is torsion free, then $\gdvc G\leq \max\{3,\gdf H +1\}$.  
\end{thm}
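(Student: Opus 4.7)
The plan is to apply Remark \ref{rem-luck-weierm} with $n = \max\{3, \gdf H + l(F)\}$ in the general case and $n = \max\{3, \gdf H + 1\}$ in the torsion-free case. Three bounds must be verified: $\gdf G \le n$ (condition (a)), $\gdf \comm_G[V] \le n-1$ (condition (b)), and $\gd_{\g[V]} \comm_G[V] \le n$ (condition (c)), for every $[V]\in \vci_G/G$.

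For (a) and (b) I would use Theorem \ref{thm-gd-l}. Since $H$ has finite index in $G$, $\vcd G = \vcd H \le \gdf H$, and a chain-decomposition argument (each strict inclusion in a chain in $\fin_G$ is detected either by intersection with $H$ or by image in $F$) yields $l(G) \le l(H) + l(F)$. In the torsion-free case $l(G)=0$, so Theorem \ref{thm-gd-l} gives $\gdf G \le \max\{3,\gdf H\}\le n$ directly. For (b), Lemma \ref{lem-1} identifies $\comm_G[V] = N_G(M)$ with $M:=(V\cap H)_{max_H}$, and $\vcd N_G(M)\le \vcd H$; Theorem \ref{thm-gd-l} applied to $N_G(M)$ (with a parallel length bound $l(N_G(M)) \le l(N_H(M)) + l(F)$) yields the bound when $\gdf H$ is large enough. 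The edge cases of small $\gdf H$ in the torsion-free setting are handled by using that $M \cong \mathbb{Z}$ is normal in $N_G(M)$ to shave off one dimension via a pulled-back model for $\efin(N_G(M)/M)$ together with a line on which $M$ acts by translation.

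For condition (c), the key observation is that, using $H$'s property $\ma_{\vci_H}$, an infinite virtually cyclic $U \le N_G(M)$ satisfies $[U]=[V]$ if and only if $U\cap M$ has finite index in $M$, which is equivalent to the image of $U$ in $Q := N_G(M)/M$ being finite. Hence $\g[V]$ is precisely the preimage of $\fin_Q$ under the quotient $N_G(M)\to Q$, and pulling back any model for $\efin Q$ along this quotient yields a model for $E_{\g[V]}N_G(M)$ of the same dimension. Therefore $\gd_{\g[V]} N_G(M) \le \gdf Q$. Since $N_H(M)/M$ has finite index in $Q$, and L\"uck-Weiermann applied to $H$ gives $\gdf(N_H(M)/M) \le \gdvc H \le \gdf H + 1$, a finite-index transfer yields $\gdf Q \le n$.

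The main obstacle is condition (b): Theorem \ref{thm-gd-l} alone produces only a bound of the form $\max\{3,\gdf H\}$, which exceeds the required $n-1$ in the torsion-free edge cases $\gdf H \le 2$. Closing this gap is what forces the normal-$\mathbb{Z}$ dimension-reduction construction rather than a direct appeal to Theorem \ref{thm-gd-l}. A secondary subtle point is the finite-index transfer from $N_H(M)/M$ to $Q$ in (c), which must be controlled so that $\gdf Q$ does not exceed $n$, and the length bookkeeping that underlies (a) and (b) in the general (not torsion-free) case.
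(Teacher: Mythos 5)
Your overall architecture coincides with the paper's: identify $\comm_G[V]=N_G(M)$ for $M=(V\cap H)_{max_H}$ via Lemma \ref{lem-1}, observe that $\g_G[V]$ is the pullback of the family of finite subgroups of $Q:=N_G(M)/M$, so that $\gd_{\g_G[V]}N_G(M)\leq \gdf Q$, and feed this into Theorem \ref{thm-luck-weierm} and Remark \ref{rem-luck-weierm}. The genuine gap is at the decisive step, namely the bound $\gdf Q\leq n$, which you delegate to an unspecified ``finite-index transfer'' from $N_H(M)/M$ to $Q$. No such transfer exists in general: $\gdf$ is not monotone under passage to finite-index overgroups (the generic bounds multiply by the index), and controlling this is precisely what the two quoted theorems of Mart\'inez-P\'erez are for. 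The paper's actual argument bounds $\vcd Q=\vcd(N_H(M)/M)\leq \gdf (N_H(M)/M)\leq \gdf N_H(M)\leq \gdf H$ --- the middle inequality coming from the proof of L\"uck--Weiermann's Theorem 5.8, not from $\gdvc H\leq \gdf H+1$ as in your chain, which is both unjustified and lossier --- then notes that $N_H(M)/M$ is torsion free, so every finite subgroup of $Q$ embeds in $F'$ and $l(Q)\leq l(F)$, and applies Theorem \ref{thm-gd-l} directly to $Q$. In the torsion-free case the finite subgroups of $Q$ are moreover cyclic (the preimage in $N_G(M)$ of a finite subgroup of $Q$ is torsion-free virtually cyclic, hence infinite cyclic), and Theorem \ref{thm-euler} then gives $\gdf Q\leq \max\{3,\vcd Q+1\}$; this is the entire source of the ``$+1$''. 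Your proposal never invokes Theorem \ref{thm-euler}, and without it the torsion-free bound does not come out --- the normal-$\mathbb{Z}$ reduction you propose instead addresses $\gdf N_G(M)$, not $\gdf Q$.

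On the other hand, you are right that Remark \ref{rem-luck-weierm} formally also requires $\gdf G\leq n$ and $\gdf N_G(M)\leq n-1$; the paper's proof is silent on these, so your attention to them is well placed. But your proposed verifications do not close these steps either: the estimate $l(G)\leq l(H)+l(F)$ gives $\gdf G\leq \max\{3,\gdf H+l(H)+l(F)\}$, which exceeds $n$ whenever $H$ has torsion, and the dimension-reduction model for $N_G(M)$ has dimension $\gd Q+1$, which you have not shown to be at most $n-1$ in the edge cases $\gdf H\leq 2$ that motivated introducing it.
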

\begin{proof} 
Let $V\in \vci_G$  and  $U=(V\cap H)_{max_H}$, by Lemma \ref{lem-1} we have that $\comm_G[V]=N_G(U)$. Thus a model for 
 $\efine N_G(U)/U $ is a model for $E_{\g_G[V]}N_G(U)$ with the $N_G(U)$-action induced by the projection $p\colon N_G(U) \to N_G(U)/U$. Hence $\gd_{\g_G[V]}N_G(U) \leq \gdf N_G(U)/U$.  

We will use Theorem \ref{thm-gd-l} to give a bound for $\gdf N_G(U)/U$.   From the exact sequence
\begin{align*}
\xymatrix{
1\ar[r] & N_H(U) \ar[r] & N_G(U) \ar[r]^(0.7){\psi|}& 
F' \ar[r] & 1, }
\end{align*}
where $\psi|$ is the restriction of $\psi$  and  $F'\subseteq F$, we have  
\begin{align*}
\xymatrix{
1\ar[r] & N_H(U)/U \ar[r] & N_G(U)/U \ar[r]^(0.7){\overline{\psi}}& 
F' \ar[r] & 1, }
\end{align*}
so $\vcd(N_G(U)/U)= \vcd(N_H(U)/U)\leq \gdf (N_H(U)/U)$. Since $H$ satisfies property $\ma_{\vci_H}$ and $\gdvc H<\infty$, by   the proof of \cite[Thm. 5.8]{luck-weiermann} we have that  $\gdf(N_H(U)/U) \leq \gdf N_H(U)$,  hence    
$$\vcd (N_G(U)/U)\leq \gdf N_H(U)\leq \gdf H,$$ 
we remark that $\gdf H \leq \gdvc H+1 <\infty$. 
\\ 
Further, $N_H(U)/U$ is torsion free because $U$ is virtually cyclic maximal in $N_H(U)$, therefore the finite subgroups of $N_G(U)/U$ embeds in $F'$, so that $$l(N_G(U)/U)\leq l(F')\leq l(F)<\infty.$$ Applying Theorem \ref{thm-gd-l} we have that  
$$\gdf N_G(U)/U \leq \max\{3, \gdf H +l(F)\}.$$
By Theorem \ref{thm-luck-weierm} and Remark \ref{rem-luck-weierm}, we may conclude that $$\gdvc G\leq \max\{3,\gdf H +l(F)\}.$$
\\
If $G$ is torsion free, then the finite subgroups of $N_G(U)/U$ are cyclic. Applying Theorem  \ref{thm-euler} we have 
\begin{align*}
\gdf N_G(U)/U &\leq \max\{3, \vcd N_G(U)/U +1\} 
\\     &\leq \max\{3, \gdf H +1\},
\end{align*}
 again by Theorem \ref{thm-luck-weierm} and  Remark  \ref{rem-luck-weierm}, we obtain the following inequality   $$\gdvc G\leq \max\{3, \gdf H +1 \}.$$ 
\end{proof}

\section{Applications } 

We will use the following Lemma and the fact that surface braid groups and  right-Artin groups can be embeded in some mapping class group of a surface, to show that they have normal subgroups  of finite index with property  of maximality in the family of virtually cyclic subgroups, and finally apply  Theorem \ref{thm-ex-max}. 

\begin{lem}\label{subgr-max}
Let $G$ be a group that satisfies property $\ma_{\vci_G}$, if $H\subseteq G$, then $H$ has property $\ma_{\vci_H}$. 
\end{lem}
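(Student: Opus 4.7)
The plan is to show that for each $V \in \vci_H$, the intersection $V_{max_G} \cap H$ serves as the unique maximal infinite virtually cyclic subgroup of $H$ containing $V$, where $V_{max_G}$ is the unique maximal element of $\vci_G$ containing $V$ guaranteed by property $\ma_{\vci_G}$.

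First I would pick an arbitrary $V \in \vci_H$ and observe that since $H \subseteq G$, we have $V \in \vci_G$ as well, so by hypothesis there is a unique $V_{max_G} \in \vci_G$ containing $V$. Then I would consider the candidate $M := V_{max_G} \cap H$. This is a subgroup of the virtually cyclic group $V_{max_G}$, hence virtually cyclic, and it contains the infinite group $V$, so $M \in \vci_H$.

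Next I would verify maximality of $M$ in $\vci_H$ with respect to containment of $V$. Suppose $W \in \vci_H$ satisfies $V \subseteq W$. Then $W \in \vci_G$ too, so by property $\ma_{\vci_G}$ applied to $W$, the unique maximal element $W_{max_G}$ of $\vci_G$ containing $W$ exists and also contains $V$. By the uniqueness clause of $\ma_{\vci_G}$ for $V$, we get $W_{max_G} = V_{max_G}$, and thus $W \subseteq V_{max_G}$. Combined with $W \subseteq H$ this gives $W \subseteq M$. This simultaneously shows existence (taking $W = M$ any maximal containing $V$, it must be contained in $M$) and uniqueness (any two maximal elements of $\vci_H$ containing $V$ are both contained in, hence equal to, $M$).

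There is no substantive obstacle: the argument hinges entirely on the observation that a subgroup of a virtually cyclic group is virtually cyclic (so intersecting with $H$ stays inside $\vci_H$ once infiniteness is noted) and on transferring the uniqueness of the maximal overgroup in $G$ down to $H$. The only point that requires a moment's care is explaining why the maximal element constructed is genuinely the \emph{unique} maximal one, rather than just \emph{a} maximal one; this follows because the argument above actually exhibits $M$ as the maximum (largest) such subgroup, not merely a maximal element.
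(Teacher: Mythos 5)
Your proof is correct and complete: the key observation that $M = V_{max_G}\cap H$ is the \emph{maximum} (not merely a maximal) element of $\vci_H$ containing $V$ settles both existence and uniqueness at once, and all the supporting facts (subgroups of virtually cyclic groups are virtually cyclic, and $W_{max_G}=V_{max_G}$ by the uniqueness clause applied to $V$) are correctly used. The paper leaves the proof of this lemma to the reader, and your argument is exactly the standard one that is intended.
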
 
The proof of the Lemma is left to the reader. \\
 
\subsection{Mapping class groups}
Let   $S$ be  a connected, compact,   orientable surface  with a finite set of   points removed from the interior (punctures).
Denote by $\Mod (S)$ the \textit{mapping class group}  of $S$.

Let $m>1$ be an integer number,   there is a natural homomorphism
$$\tau\colon \Mod(S) \to Aut(H_1(S;\z_m))$$
defined by the action of diffeomorphisms on the homology group.  The subgroup 
$$\Mod(S)[m]=\ker {\tau},$$ is called the $m$-\textit{congruence subgroup of} $\Mod(S)$. 
Note that this subgroup  has
finite index in $\Mod(S)$. 

\begin{thm}\cite[Prop. 5.11, Thm. 5.3]{D-A} \label{thm-max-mod}
Let $S$ be an orientable compact surface with finitely many punctures and $\chi(S)\leq 0$, then $\gdvc Mod(S)$ is finite. Furthermore, for $m\geq 3$ the group $\Mod(S)[m]$ satisfies property  $\ma_{\vci_{\Mod(S)[m]}}$. 
\end{thm}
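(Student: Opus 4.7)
The plan is to establish the two assertions separately, using the Nielsen--Thurston classification of surface homeomorphisms together with the L\"uck--Weiermann machinery developed in the earlier sections.

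For the finiteness of $\gdvc \Mod(S)$, I would apply Theorem \ref{thm-luck-weierm} and Remark \ref{rem-luck-weierm}. That $\gdf \Mod(S) < \infty$ is classical, via the cocompact action on Teichm\"uller space (or a suitable spine) as a model for $\efine \Mod(S)$. For each equivalence class $[V] \in [\vci_{\Mod(S)}]$ with representative $V = \langle f\rangle$, the commensurator $\comm_{\Mod(S)}[V]$ has controlled structure: if $f$ is pseudo-Anosov, then $\comm_{\Mod(S)}[V]$ stabilizes the unordered pair of invariant projective measured foliations of $f$ and is itself virtually cyclic; if $f$ is reducible, $\comm_{\Mod(S)}[V]$ lies in the stabilizer of the canonical reduction system of $f$, and by cutting along this system one reduces to mapping class groups of surfaces of strictly smaller complexity, enabling an induction on $-\chi(S)$. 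This yields finite bounds for $\gdf \comm_{\Mod(S)}[V]$ and $\gd_{\g[V]} \comm_{\Mod(S)}[V]$, and Remark \ref{rem-luck-weierm} then forces $\gdvc \Mod(S) < \infty$.

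For the maximality property $\ma_{\vci_{\Mod(S)[m]}}$, first recall that for $m \geq 3$ the level-$m$ congruence subgroup is torsion-free (classical, by combining faithfulness on $H_1(S;\z_m)$ with Nielsen realization); hence its infinite virtually cyclic subgroups are cyclic, and what must be shown is that every $\langle f\rangle \subseteq \Mod(S)[m]$ lies in a unique maximal cyclic subgroup. Applying Nielsen--Thurston to $f$, and using that level $m \geq 3$ forces $f$ to be pure, one obtains a canonical reduction system $\sigma$ along which $f$ acts either as a power of a Dehn twist on each annular piece or as a pseudo-Anosov on each non-annular complementary piece. Any cyclic supergroup $\langle g\rangle$ of $\langle f\rangle$ must preserve $\sigma$ and match this componentwise behaviour: on each pseudo-Anosov piece, $g$ fixes the same unordered pair of projective foliations as $f$ and therefore lies in the cyclic stabilizer of these foliations in $\Mod(S)[m]$; on each annular piece, $g$ restricts to a root of a Dehn twist power, and such roots are unique in a torsion-free group. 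Assembling these local data produces the unique maximal cyclic $V_{\max}$ containing $\langle f\rangle$.

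The main obstacle lies in the reducible case: one must glue the locally extracted roots across the pieces of the cut surface into a single global mapping class in $\Mod(S)[m]$, and verify that the purity enforced by the congruence hypothesis eliminates the ambiguities (permutations of components, fractional twists, or periodic factors on ``trivial'' components) that would otherwise allow two distinct maximal cyclic supergroups. It is precisely the absence of torsion at level $m \geq 3$ that makes the local uniqueness statements assemble into a global one, and this is what underpins the second half of the theorem.
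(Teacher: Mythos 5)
This theorem is not proved in the paper at all: it is imported verbatim from \cite[Prop.~5.11, Thm.~5.3]{D-A}, so there is no internal proof to compare your attempt against. Your overall strategy --- L\"uck--Weiermann plus Nielsen--Thurston for the finiteness statement, and torsion-freeness plus purity plus canonical reduction systems for the $\ma_{\vci}$ property --- is indeed the strategy of the cited reference, so the outline is aimed in the right direction. However, there is one concrete error and one acknowledged but unresolved gap, and the two are in fact the same issue. The error: you assert that roots ``are unique in a torsion-free group.'' This is false as a general group-theoretic statement; in the torsion-free group $\langle a,b \mid bab^{-1}=a^{-1}\rangle$ one has $(ab)^2 = a(bab^{-1})b^2 = b^2$ with $ab \neq b$, so $b^2$ has two distinct square roots generating distinct cyclic subgroups. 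Uniqueness of roots in mapping class groups (and in $\Mod(S)[m]$) is a genuine theorem, itself proved via the Nielsen--Thurston classification and the structure of centralizers --- note that the introduction of the present paper explicitly lists ``the property of uniqueness of roots'' of mapping class groups as a nontrivial input in \cite{D-J-A}. So the annular case of your argument cannot be dispatched by torsion-freeness alone, and this is precisely the content you would need to supply.

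The second gap is the one you flag yourself: assembling the componentwise data into a unique global maximal cyclic subgroup. The stabilizer of a canonical reduction system $\sigma$ is not a mapping class group of a simpler surface but an extension of a subgroup of $\Mod(S-\sigma)$ by the free abelian group generated by the twists about the curves of $\sigma$, and a root $g$ of $f$ need not split as a product of its restrictions; one must use that elements of $\Mod(S)[m]$ for $m\geq 3$ are pure (no permutation of components or of the curves of $\sigma$, no periodic pieces) and then argue uniqueness of the maximal cyclic subgroup inside this extension, where the abelian twist factor is exactly where the non-uniqueness of roots illustrated above could occur. A similar caveat applies to your finiteness argument: for a reducible or Dehn-twist class $V$, the commensurator $\comm_{\Mod(S)}[V]$ is the full stabilizer of a multicurve, and bounding $\gd_{\g[V]}\comm_{\Mod(S)}[V]$ requires passing to a quotient by a normal infinite cyclic (or free abelian) subgroup rather than a naive induction on $-\chi(S)$. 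None of this invalidates your plan, but as written the proposal establishes neither half of the theorem without importing the root-uniqueness and purity results that constitute the actual work in \cite{D-A}.
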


\begin{cor}\label{subtf-mod}
Let $S$ be an orientable compact surface with finitely many punctures and $\chi(S)\leq 0$. If $H$ is a torsion free subgroup of $Mod(S)$, then $\gdvc H\leq \gdf H +1$. 
\end{cor}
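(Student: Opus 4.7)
\emph{Proof proposal.} The plan is to intersect $H$ with a principal congruence subgroup to place ourselves in a setting where Theorem \ref{thm-ex-max} applies. Fix $m \geq 3$ and set $K := H \cap \Mod(S)[m]$. By Theorem \ref{thm-max-mod}, $\Mod(S)[m]$ is a finite-index subgroup of $\Mod(S)$ satisfying property $\ma_{\vci}$, and it is torsion-free for $m \geq 3$. Consequently $K$ is a finite-index, normal, torsion-free subgroup of $H$, and Lemma \ref{subgr-max} transfers property $\ma_{\vci_K}$ from $\Mod(S)[m]$ down to $K$. Restricting a finite-dimensional $\Mod(S)$-CW-model for $\evce \Mod(S)$ (which exists by Theorem \ref{thm-max-mod}) to the $K$-action yields a model for $\evce K$, because every finite (resp.\ virtually cyclic) subgroup of $K$ is finite (resp.\ virtually cyclic) in $\Mod(S)$; hence $\gdvc K \leq \gdvc \Mod(S) < \infty$.

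If $K = H$, then $H$ itself has $\ma_{\vci_H}$, and the Lück--Weiermann inequality recalled in the introduction gives $\gdvc H \leq \gdf H + 1$ directly. Otherwise $F := H/K$ is a non-trivial finite group, and applying Theorem \ref{thm-ex-max} to the short exact sequence $1 \to K \to H \to F \to 1$ (with $H$ torsion-free) produces
\begin{equation*}
\gdvc H \leq \max\{3,\, \gdf K + 1\} \leq \max\{3,\, \gdf H + 1\},
\end{equation*}
the second inequality because restricting an $H$-model for $\underline{E}H$ to $K$ is a model for $\underline{E}K$, and so $\gdf K \leq \gdf H$.

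When $\gdf H \geq 2$ the maximum on the right equals $\gdf H + 1$, yielding the desired bound. For the residual case $\gdf H \leq 1$: torsion-freeness forces $\cd H \leq 1$, so Stallings--Swan makes $H$ free; free groups manifestly satisfy $\ma_{\vci_H}$ (each non-trivial element lies in a unique maximal infinite cyclic subgroup), so the Lück--Weiermann inequality once more gives $\gdvc H \leq \gdf H + 1$.

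The main obstacle is this low-dimensional residue: Theorem \ref{thm-ex-max} natively delivers only $\max\{3,\, \gdf H + 1\}$, which falls short of the corollary's target precisely when $\gdf H \leq 1$. Closing that gap requires the independent structural observation that a torsion-free group of proper geometric dimension at most $1$ is free, after which Lück--Weiermann's direct inequality for groups with $\ma_{\vci}$ takes over.
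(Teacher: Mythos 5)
Your proof follows the same route as the paper's: intersect $H$ with the congruence subgroup $\Mod(S)[m]$, $m\geq 3$, to obtain a normal finite-index subgroup $H_m$ with property $\ma_{\vci_{H_m}}$ (via Theorem \ref{thm-max-mod} and Lemma \ref{subgr-max}), and then apply Theorem \ref{thm-ex-max} to $1\to H_m\to H\to H/H_m\to 1$. You are in fact more careful than the paper's two-line proof, which silently passes over three points you close correctly: verifying the hypothesis $\gdvc H_m<\infty$ by restricting a finite-dimensional model for $\evce\Mod(S)$, the degenerate case $H\subseteq\Mod(S)[m]$ where the quotient is trivial and Theorem \ref{thm-ex-max} does not apply (handled via inequality (\ref{eq-1}) for groups with $\ma_{\vci}$), and the gap between $\max\{3,\gdf H+1\}$ and the claimed $\gdf H+1$ when $\gdf H\leq 1$ (handled via Stallings--Swan).
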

\begin{proof}
Let $H$ be a torsion free subgroup of $Mod(S)$ and   $H_m=H\cap \Mod(S)[m]$, with $m\geq 3$. By Theorem \ref{thm-max-mod} and Lemma \ref{subgr-max}, $H_m$ has property $\ma_{\vci_{H_m}}$.  Since $H_m$ is a normal finite index subgroup of $H$, by applying  Theorem \ref{thm-ex-max}, we conclude that $\gdvc H\leq \gd H +1$. 
\end{proof}

\subsection{Surface braid groups}

Let $S$ be a compact orientable surface. The \textit{$n$-configuration space of $S$} is defined as follows,
 $$F_n(S)= \{(y_1,...,y_n)\in S^n \mid y_i\neq y_j \text{ for all }i,j\in\{1,...,n\}, i\neq j\}.$$
 We endow $F_n(S)$ with the topology induced by the product topology from  the space $S^n$. The configuration space $F_n(S)$ is a connected $2n$-dimensional open manifold. There is a natural free action of the symmetric group $\Sigma_n$ on $F_n(S)$ by permuting the coordinates. We will denote the quotient  by the action as  $D_n(S)=F_n(S)/\Sigma_n$ and it may be thought of as the configuration space of $n$ unordered points. 
\begin{defi}
Let $n\in \n$. The \emph{$n$-braid group} of $S$ is defined as  $$B_n(S)=\pi_1(D_n(S)).$$ 
\end{defi}
If $S$ is a closed orientable surface of genus $>1$, $D_n(S)$ is a model for $E B_n(S)$, \cite{Fadell-Neuwirth}.  We remark that  braid groups  $B_n(S)$ are torsion free if and only if $S$ is different from $\s^2$ (see \cite{VanBuskirk}), in that case $\gd B_n(S)$ coincides with the cohomological dimension $\cd B_n(S)$, except possibly for the case where $\cd(B_n(S)) = 2$ and $\gd(B_n(S)) = 3$. 
\begin{thm}\cite[Thm. 1.2]{GGM} \label{thm-gdfbn}
If $n\geq 1$ and    $S$ is  a  closed   orientable surface of genus $g\geq 1$,  then $\cd B_n(S)=n+1$. 
\end{thm}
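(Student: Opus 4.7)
The plan is to prove $\cd B_n(S)=n+1$ by passing to the pure braid group $P_n(S)=\pi_1(F_n(S))$ and inducting on $n$ via the Fadell--Neuwirth fibration. Since $S$ is a closed orientable surface of genus $g\geq 1$, the excerpt records that $B_n(S)$ is torsion-free, so by Serre's theorem on cohomological dimension $\cd B_n(S)=\cd P_n(S)$ (where $P_n(S)$ sits as a normal subgroup of index $n!$). The base case $n=1$ is immediate: $P_1(S)=\pi_1(S)$ is an orientable surface group of positive genus, a Poincar\'e duality group of dimension $2$, giving $\cd P_1(S)=2$.

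For the inductive step, the Fadell--Neuwirth fibration $F_n(S)\to F_{n-1}(S)$ that forgets the last coordinate has fiber $S$ with $n-1$ points removed, which for $n\geq 2$ is homotopy equivalent to a finite wedge of circles. Its fundamental group $F$ is thus free (of rank $2g+n-2$), with $\cd F=1$. Taking fundamental groups yields the short exact sequence
\begin{equation*}
1\to F\to P_n(S)\to P_{n-1}(S)\to 1,
\end{equation*}
and combined with the inductive hypothesis $\cd P_{n-1}(S)=n$ and the standard extension inequality $\cd G\leq \cd N+\cd Q$, we obtain the upper bound $\cd P_n(S)\leq n+1$.

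The main obstacle is the matching lower bound $\cd P_n(S)\geq n+1$. The strategy is to analyse the Hochschild--Serre spectral sequence
\begin{equation*}
E_2^{p,q}=H^p(P_{n-1}(S);H^q(F;M))\Rightarrow H^{p+q}(P_n(S);M)
\end{equation*}
of the extension above. Because $\cd F=1$ and $\cd P_{n-1}(S)=n$ by induction, the only possibly nonzero term of total degree $n+1$ is $E_2^{n,1}$; the differentials entering it from $E_2^{n-2,2}=0$ and leaving it to $E_2^{n+2,0}=0$ vanish for dimension reasons, so $E_\infty^{n,1}=E_2^{n,1}$. It then suffices to exhibit a coefficient module $M$ for which $H^n(P_{n-1}(S);H^1(F;M))$ is nonzero. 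The subtle point is the construction of such $M$: one cup-products the top Poincar\'e--duality class of $\pi_1(S)$ (which seeds the induction at each stage with nontrivial top cohomology of the base) with the free-group cohomology of the fiber, and must check that the monodromy action of $P_{n-1}(S)$ on $H^1(F;\mathbb{Z})$ does not annihilate the resulting class; here the assumption $g\geq 1$ is essential, as it is what ensures $S$ itself contributes nontrivial cohomology. Together with the upper bound and the reduction to $P_n(S)$, this gives $\cd B_n(S)=n+1$.
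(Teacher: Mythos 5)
This statement is not proved in the paper at all: it is imported verbatim as \cite[Thm.\ 1.2]{GGM}, so the only comparison available is between your argument and the one in Gon\c calves--Guaschi--Maldonado. Your reduction to $P_n(S)$ via Serre's theorem, the base case, the Fadell--Neuwirth extension $1\to F\to P_n(S)\to P_{n-1}(S)\to 1$ with $F$ free of rank $2g+n-2$, and the resulting upper bound $\cd P_n(S)\leq n+1$ are all correct, and your Lyndon--Hochschild--Serre bookkeeping showing $E_\infty^{n,1}=E_2^{n,1}$ is also fine.

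The genuine gap is the lower bound. Everything there hinges on producing a $P_n(S)$-module $M$ with $H^n\bigl(P_{n-1}(S);H^1(F;M)\bigr)\neq 0$, and you do not produce one: ``cup-product the top Poincar\'e duality class with the free-group cohomology of the fiber and check that the monodromy does not annihilate the resulting class'' is a description of an obstacle, not an argument. The monodromy of the Fadell--Neuwirth fibration is the point-pushing action on the $(n-1)$-punctured surface, which acts highly nontrivially on $H^1(F;\mathbb{Z})$; whether the class you have in mind survives is exactly the content of the theorem and cannot be waved through. Two standard ways to close this. First, the route actually taken in \cite{GGM}: exhibit a free abelian subgroup of rank $n+1$ inside $P_n(S)$ (for the torus this is transparent from $F_n(T^2)\cong T^2\times F_{n-1}(T^2\setminus\{pt\})$, which gives $\mathbb{Z}^2\times\mathbb{Z}^{n-1}$; for higher genus one uses disjoint annuli), whence $\cd P_n(S)\geq\cd\mathbb{Z}^{n+1}=n+1$. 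Second, and closer in spirit to your spectral-sequence setup: invoke Bieri's theorem that an extension of a duality group by a duality group (with the kernel of type $FP$) is again a duality group of the sum of the dimensions; since $\pi_1(S)$ is a $PD_2$-group and each kernel is finitely generated free, every $P_n(S)$ is a duality group of dimension exactly $n+1$, which gives the lower bound without any monodromy analysis. As written, your proof establishes only $\cd B_n(S)\leq n+1$.
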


The mapping class groups are closely related to braid groups, see \cite{Birman}.  
Let   $S$ be a closed orientable surface of genus $\geq 2$,   and $Q_n=\{x_1,...,x_n\}$ be a set with $n\geq 1$ different points  in the interior of  $S$, then \begin{align}\label{suc-bn-mod}
\xymatrix{
1\ar[r] & B_n(S)  \ar[r] &  \Mod(S-Q_n)  \ar[r]^(0.65){\rho}& \Mod(S) \ar[r] & 1 . }
\end{align}    

\begin{prop}
If $n\geq 1$ and $S$ is an orientable closed surface of genus $g\geq 2$, then 
   $ \gdvc B_n(S)\leq n+2$.   
\end{prop}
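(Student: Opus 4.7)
The plan is to realize $B_n(S)$ as a torsion-free subgroup of a mapping class group covered by the preceding results, and then bound its virtually-cyclic dimension via its proper (equivalently, its free) dimension. By the short exact sequence~(\ref{suc-bn-mod}), $B_n(S)$ embeds in $\Mod(S - Q_n)$. Here $S - Q_n$ is a compact orientable surface with $n \geq 1$ punctures, and since $g \geq 2$ we have $\chi(S - Q_n) = 2 - 2g - n < 0$, so $\Mod(S - Q_n)$ falls under the hypotheses of Theorem~\ref{thm-max-mod} and hence of Corollary~\ref{subtf-mod}.

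Next I would verify that $B_n(S)$ is torsion-free. Since $g \geq 2$, in particular $S \neq \s^2$, so VanBuskirk's theorem (cited just before Theorem~\ref{thm-gdfbn}) gives torsion-freeness. Applying Corollary~\ref{subtf-mod} to the torsion-free subgroup $B_n(S) \subseteq \Mod(S - Q_n)$ yields
$$
\gdvc B_n(S) \;\leq\; \gdf B_n(S) + 1 \;=\; \gd B_n(S) + 1,
$$
where the last equality holds because for a torsion-free group the only finite subgroup is trivial, so proper and free geometric dimensions coincide.

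It remains to show $\gd B_n(S) \leq n+1$. By Theorem~\ref{thm-gdfbn} we have $\cd B_n(S) = n+1$. For $n \geq 2$ this is at least $3$, so the Eilenberg--Ganea-type exception noted in the excerpt cannot occur and $\gd B_n(S) = \cd B_n(S) = n+1$. For $n = 1$, $B_1(S) = \pi_1(S)$ is a closed-surface group of genus $\geq 2$ and admits $S$ itself as a model for $K(\pi,1)$, so $\gd B_1(S) = 2 = n+1$. Combining these gives $\gdvc B_n(S) \leq n + 2$, as required. The argument is essentially a direct assembly of ingredients already established in the excerpt, so I do not anticipate a serious obstacle; the only point needing care is the $n = 1$ case, where the Eilenberg--Ganea exception is sidestepped precisely because $S$ itself serves as an explicit two-dimensional model for $K(\pi_1(S),1)$.
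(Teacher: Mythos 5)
Your proof is correct and follows essentially the same route as the paper: embed $B_n(S)$ in $\Mod(S-Q_n)$ via the sequence~(\ref{suc-bn-mod}), apply Corollary~\ref{subtf-mod}, and use $\cd B_n(S)=n+1$ from Theorem~\ref{thm-gdfbn}. The only cosmetic difference is at $n=1$, where the paper invokes hyperbolicity of $\pi_1(S)$ to get $\gdvc B_1(S)=2$ directly, while you run the same corollary with $S$ as a $2$-dimensional $K(\pi,1)$ to get the (weaker but sufficient) bound $3=n+2$.
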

\begin{proof}
If $n=1$, then $B_1(S)=\pi_1(S)$  is hyperbolic and $\gdvc B_1(S)=2$.  \\
If $n\geq 2$, then $\cd B_n(S)\geq 3$ and $\cd B_n(S)=\gd B_n(S)$.  We consider $B_n(S)$ as  a subgroup of $Mod(S-Q_n)$  via the inclusion of the exact sequence (\ref{suc-bn-mod}). 
  By  Corollary \ref{subtf-mod} and Theorem \ref{thm-gdfbn} we have   
$$\gdvc B_n(S)\leq \gd B_n(S)+1=n+2. $$ 
\end{proof}

\subsection{Right angled Artin groups}

Artin groups arose as a natural generalization of braid groups $B_n=B_n(\mathbb{D}^2)$.\emph{ A right-angled Artin group} $A$ is a group with presentation of the form 
$$A= \langle s_1,...,s_n \mid \underbrace{s_is_js_i...}_{m_{ij}}=\underbrace{s_js_is_j...}_{m_{ij}} \;\; \text{for all }\; i\neq j\rangle,  $$
where $m_{ij}=m_{ji} \in \{2,\infty\}$ for all $i,j$, when $m_{ij}=\infty$   we omit the relation  between $s_i$ and $s_j$.  



We remark that right-angled Artin groups contain many interesting groups: some $3$-manifold groups, surface groups \cite{C-W} and graph braid groups. 

It is well known that for any right-angled Artin group $A$, the geometric dimension $\gd A$ is finite,  and $A$ is torsion free.  
 
\begin{prop}\label{raag-mcg}\cite{Koberda}
Every right-angled Artin group embeds in the mapping class group of any surface of sufficiently high genus.  
\end{prop}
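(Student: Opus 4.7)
The plan is to prove this by the classical Dehn twist construction due to Koberda. Given a RAAG $A$ with generators $s_1,\dots,s_n$ and defining graph $\Gamma$ (an edge between $s_i$ and $s_j$ encodes the commutation relation $s_is_j=s_js_i$), the goal is to realize $A$ inside $\Mod(S)$ by sending each $s_i$ to a suitable power of a Dehn twist about a carefully chosen essential simple closed curve $c_i$ on $S$.

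First, I would construct a curve system matching the graph. I seek essential simple closed curves $c_1,\dots,c_n$ on $S$ such that $c_i$ and $c_j$ can be isotoped to be disjoint exactly when $\{s_i,s_j\}$ is an edge of $\Gamma$, and otherwise intersect essentially. Equivalently, the co-intersection graph of the curve system should equal $\Gamma$. For a surface of genus $g$ sufficiently large (depending on $n$), any finite graph is realizable as the co-intersection graph of a collection of essential simple closed curves: one can, for instance, embed the complement graph $\Gamma^c$ as a $1$-complex in a surface and use regular neighborhoods of its edges together with curves transverse to them. Since the statement allows the genus to be taken as large as necessary, this combinatorial realizability step presents no obstruction.

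Second, I would apply Koberda's theorem on powers of Dehn twists: for the curve system above and any integer $N$ sufficiently large, the subgroup $\langle T_{c_1}^N,\dots,T_{c_n}^N\rangle$ of $\Mod(S)$ is isomorphic to $A$. The commutation relations of $A$ are automatic, because disjoint simple closed curves have commuting Dehn twists. The real content is the absence of extra relations: one must show that every freely reduced word in the natural presentation of $A$ represents a nontrivial mapping class when the $s_i$ are replaced by $T_{c_i}^N$. This is done by a ping-pong argument combined with a representation-theoretic input; typically one considers the action of a well-chosen power on $H_1$ of a finite cover of $S$, or the action on the curve complex and on subsurface projections, to detect non-triviality of arbitrary words for all $N$ sufficiently large.

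The hard part is the second step, which is the genuine heart of Koberda's result and cannot be reduced to elementary Dehn twist relations. For this proposition, I would cite Koberda's theorem directly as a black box (as the author does), the only observation to add being that the curve-realization step of the first paragraph can be carried out on any surface of sufficiently high genus, so that the embedding exists uniformly as the genus grows.
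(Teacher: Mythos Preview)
The paper does not give a proof of this proposition at all: it is stated with a citation to \cite{Koberda} and used as a black box. Your sketch correctly outlines the strategy of Koberda's original argument (realize the defining graph as the co-intersection pattern of essential simple closed curves on a high-genus surface, then take sufficiently high powers of the corresponding Dehn twists), and you rightly identify the injectivity step as the substantive content requiring Koberda's ping-pong/projection machinery; but since the paper itself offers no argument here, there is nothing further to compare.
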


\begin{prop} 
If $A$ is a right-angled Artin group, then $$\gdvc A\leq \gd A+1.$$ 
 In addition, for any subgroup $H\subseteq A$,  $\gdvc H\leq \gd H+1$. 
\end{prop}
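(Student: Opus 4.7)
The plan is to reduce the statement to Corollary \ref{subtf-mod} by embedding $A$ into a mapping class group via Proposition \ref{raag-mcg}, and then to exploit the fact that right-angled Artin groups (and all of their subgroups) are torsion free, so that the proper geometric dimension coincides with the ordinary geometric dimension.

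First I would observe that $A$ is torsion free and, by Proposition \ref{raag-mcg}, embeds as a subgroup of $\Mod(S)$ for some closed orientable surface $S$ of sufficiently high genus. Choosing the genus large enough also ensures $\chi(S)\leq 0$, so the hypothesis of Corollary \ref{subtf-mod} is met with $H=A$. That corollary then yields $\gdvc A \leq \gdf A + 1$. Since $A$ is torsion free, every finite subgroup of $A$ is trivial, so $\gdf A = \gd A$, and the first inequality $\gdvc A \leq \gd A +1$ follows.

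For the second claim, I would note that any subgroup $H\subseteq A$ is again torsion free and inherits the embedding $H \hookrightarrow A \hookrightarrow \Mod(S)$. The exact same reasoning applies: Corollary \ref{subtf-mod} gives $\gdvc H \leq \gdf H + 1$, and torsion-freeness of $H$ yields $\gdf H = \gd H$, so $\gdvc H \leq \gd H + 1$.

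The only point that needs a second look is whether Corollary \ref{subtf-mod} really applies to an arbitrary torsion-free subgroup of $\Mod(S)$. Inspecting its proof, the input used is that $H_m = H \cap \Mod(S)[m]$ is a normal finite-index subgroup of $H$ inheriting property $\ma_{\vci_{H_m}}$ from Theorem \ref{thm-max-mod} via Lemma \ref{subgr-max}, after which Theorem \ref{thm-ex-max} is applied to the extension $1 \to H_m \to H \to H/H_m \to 1$. None of this requires any normality or finiteness hypothesis on $H$ inside $\Mod(S)$. Hence there is no real obstacle: the proof is a direct assembly of the tools already established in the paper, with Proposition \ref{raag-mcg} serving as the bridge between right-angled Artin groups and mapping class groups.
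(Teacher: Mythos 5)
Your proposal is correct and follows exactly the paper's argument: the paper likewise proves this by combining Proposition \ref{raag-mcg} (embedding $A$ into a mapping class group) with Corollary \ref{subtf-mod}, using torsion-freeness to identify $\gdf$ with $\gd$. Your extra check that Corollary \ref{subtf-mod} applies to arbitrary torsion-free subgroups of $\Mod(S)$ is a sensible verification but does not change the route.
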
 
\begin{proof}
The proof follows directly by  Corollary \ref{subtf-mod} and Proposition \ref{raag-mcg}.
\end{proof} 





\vspace{3cm}


\begin{thebibliography}{}


\bibitem{D-J-A}{J. Aramayona, D. Juan-Pineda, A. Trujillo-Negrete. On the virtually-cyclic dimension of mapping class groups of punctured spheres,  	arXiv:1708.03709, to appear in Algebraic and Geometric Topology. 
}
\bibitem{Birman}{J.  S. Birman. Braids, links and mapping class groups, Ann. Math. Stud. 82, Princeton University Press, 1974.}

\bibitem{C-W}{J. Crisp and B. Wiest.  Embeddings of graph braid and surface groups in right-angled Artin groups and
braid groups, Algebr. Geom. Topol. 4 (2004), 439–472.}

\bibitem{D-J}{M. Davis and T. Januszkiewicz. Right-angled Artin groups are commensurable with right-angled Coxeter groups, J. Pure Appl. Algebra 153 (2000), no. 3, 229–235.}



\bibitem{Fadell-Neuwirth} {E. Fadell, L. Neuwirth. Configuration spaces, Math. Scand. 10 (1962) 111–118.}

\bibitem{D-A}{D. Juan-Pineda,  A. Trujillo-Negrete. On Classifying Spaces for the Family of Virtually Cyclic Subgroups in Mapping Class Groups, Pure and Applied Mathematics Quarterly Volume 12, Number 2, 261–292, 2016}

\bibitem{Koberda}{Koberda, T. Right-angled Artin groups and a generalized isomorphism problem for finitely generated subgroups of mapping class groups,  Geom. Funct. Anal. (2012) 22: 1541}

\bibitem{GGM}{D. Lima Gon\c calves, J. Guaschi, M. Maldonado. Embeddings and the (virtual) cohomo- logical dimension of the braid and mapping class groups of surfaces. 2016. $<$hal-01377681$>$}

\bibitem{luck}{W. L\"uck. { Survey on classifying spaces for families of subgroups.}  In infinite groups: geometric, combinatorial and dynamical aspects, volume 248 of Progr. Math., pages 269-322. Birkh\"auser, Basel, 2005.} 

\bibitem{luck-weiermann}{W. L\"uck., and M. Weiermann. {\em On the classifying space of the family of virtually cyclic subgroups.}
Pure and Applied Mathematics Quarterly, \textbf{8} Nr. 2 (2012) 497--555.} 
￼￼
\bibitem{Conchita-Eul}{C. Mart\'inez-P\'erez. Euler classes and Bredon cohomology for groups with restricted families of finite subgroups, Math. Z. 275 (2013), 761—780.}

\bibitem{Conchita} C. Mart\'inez-P\'erez, {\em A bound for the Bredon cohomological dimension.}
J. Group Theory, \textbf{10}  (2007) 731-747.

\bibitem{VanBuskirk}{J. Van Buskirk. Braid groups of compact 2-manifolds with elements of finite order, Trans.  Amer. 
Math. Soc. 122 (1966), 81–97.}





 
 \end{thebibliography}
\end{document}